\newtheorem{theorem}{Theorem}[section]
\newcommand{\R}{\mathbb{R}}
\newcommand{\Rd}{\mathbb{R}^d}
\newcommand{\nN}{n \in \mathbb{N}}
\newcommand{\N}{\mathbb{N}}
\newcommand{\C}{\mathbb{C}}
\newcommand{\E}{\mathbb{E}}
\newcommand{\cadlag}{c\`adl\`ag}
\newcommand{\tr}{\mbox{tr}}
\newcommand{\st}{\stackrel{d}{=}}
\newcommand{\bean}{\begin{eqnarray*}}
\newcommand{\eean}{\end{eqnarray*}}
\newcommand{\la}{\langle}
\newcommand{\ra}{\rangle}
\begin{document}

\date{}

\title{Infinite Dimensional Ornstein-Uhlenbeck Processes Driven by L\'{e}vy Processes}

\author{David Applebaum, \\ School of Mathematics and Statistics,\\ University of
Sheffield,\\ Hicks Building, Hounsfield Road,\\ Sheffield,
England, S3 7RH\\ ~~~~~~~\\e-mail: D.Applebaum@sheffield.ac.uk}

\maketitle

\begin{abstract}

We review the probabilistic properties of Ornstein-Uhlenbeck processes in Hilbert spaces driven by L\'{e}vy processes. The emphasis is on the different contexts in which these processes arise, such as stochastic partial differential equations, continuous-state branching processes, generalised Mehler semigroups and operator self-decomposable distributions. We also examine generalisations to the case where the driving noise is cylindrical.

\end{abstract}

\section{Introduction}

The physical phenomenon of Brownian motion is well described by the stochastic process that bears its name, which originated in the work of Einstein, Bachelier, Smoluchowski and Wiener, provided the particle moves ``freely'' in its surrounding medium, subject only to the random force of molecular bombardment. If the viscous properties of the medium are also taken into account, Ornstein and Uhlenbeck \cite{OrUh} proposed that the velocity $v(t)$ at time $t$ of a diffusing particle of mass $m$ should satisfy the Langevin equation:
\begin{equation} \label{langpre}
m\frac{dv}{dt} + \beta v = F(t),
\end{equation}
where $\beta$ is the viscosity coefficient, and $F$ is a random force acting on the particle. Doob \cite{Doob} rewrote (\ref{langpre}) as an SDE in which the formal differential $F(t)dt$ was replaced by the stochastic differential $dB(t)$ of Brownian motion. Its solution (with $m= 1$, for convenience) is the prototypical {\it Ornstein-Uhlenbeck process},
$$ v(t) = e^{-\beta t}v(0) + \int_{0}^{t}e^{-\beta(t-s)}dB(s),$$
wherein the right-hand-side is the standard Wiener stochastic integral, soon to be extensively generalised by It\^{o}. The process $(v(t), t \geq 0)$ is both Gaussian and Markov, and if we choose $v(0)$ to have the limiting distribution $N(0, 1/2\beta)$, then it is also stationary.

From a probabilistic point of view, it is natural to consider different driving noises, and also to work in a multivariate framework. A number of authors have investigated the case where $B$ is generalised to be an arbitrary L\'{e}vy process, and the corresponding generalised Ornstein-Uhlenbeck processes are interesting from both a theoretical and applied viewpoint (see e.g. \cite{Appbk} pp.241-3 and references therein). Most of the theoretically interesting properties in the finite-dimensional case, such as the structural relationship with the class of self-decomposable distributions, will re-emerge in the the infinite-dimensional framework that we consider below; among the most important areas of application we mention volatility modelling in mathematical finance \cite{BNS, KLM}, and physical models of anomalous diffusion \cite{JMF}.

In this review, we will be concerned solely with infinite-dimensional generalisations where the noise is a general L\'{e}vy process taking values in a real Hilbert space. The rationale for this choice, is that this class of processes is very rich, and that they lie at the intersection of a number of interesting areas in probability theory and stochastic analysis. In particular this article will consider their role in the study of stochastic partial differential equations (SPDEs), generalised Mehler semigroups, operator self-decomposable distributions, continuous-state branching processes, and cylindrical measures and processes. Some of these connections have been investigated by other authors, including the current one, see e.g. \cite{App1, BRS, Li, Jur1}, but it seems to be a useful endeavour to discuss them all within a single article, and to give readers some pointers to the large and growing literature in these active areas of research.

We will not consider the case herein where the noise is purely Gaussian. This is because that theory has been very well developed, and the interested reader can find full accounts in the monographs \cite{dPZ, dPZ2}, and references therein. We also resisted the temptation to generalise further to Banach space-valued processes, as that theory has not yet reached such a mature stage as the Hilbert space one.

The plan of the paper is as follows. After some preliminaries on L\'{e}vy processes and stochastic integrals in section 2, we define infinite-dimensional Orstein-Uhlenbeck processes in section 3, and see how they arise naturally in the study of SPDEs with additive L\'{e}vy noise. In section 4, we collect together some of the basic probabilistic properties of these processes. Mehler semigroups of operators, skew-convolution semigroups of measures and their generalisations are the topic of section 5. In section 6, we turn to invariant measures and operator self-decomposability. Finally in section 7, we look at various approaches to defining Ornstein-Uhlenbeck processes driven by cylindrical L\'{e}vy noise.

\vspace{5pt}

{\it Preliminaries and Notation.} Throughout this work $H$ is a real separable Hilbert space, with inner product $\la \cdot, \cdot \ra$ and norm $||\cdot||$, ${\cal B}(H)$ is its Borel $\sigma$-algebra, and  ${\cal M}_{1}(H)$ is the space of all Borel probability measures defined on $H$. We denote by $B_{b}(H)$ the Banach space (with respect to the supremum norm) of all real-valued, bounded Borel measurable functions defined on $H$, and by $C_{b}(H)$, its closed subspace of bounded continuous functions.  The open ball of radius $1$ in $H$, centered at the origin, will always be denoted by $B_{1}$. A positive self-adjoint linear operator $T$ in $H$ is {\it trace-class} if its trace $\tr(T) = \sum_{n=1}^{\infty} \la Te_{n}, e_{n} \ra < \infty$, for some (and hence all) complete orthonormal basis $(e_{n}, \nN)$ in $H$. The algebra of all bounded linear operators on $H$ will be denoted by $L(H)$.

The {\it Fourier transform} (or characteristic function) of $\mu \in {\cal M}_{1}(H)$ is the bounded continuous mapping $\widehat{\mu}: H \rightarrow \C$ defined for all $u \in H$ by
$$ \widehat{\mu}(u) = \int_{H}e^{i \la u, x \ra}\mu(dx).$$
The {\it convolution} $\mu_{1} * \mu_{2}$ of $\mu_{1}, \mu_{2} \in {\cal M}_{1}(H)$, is the unique Borel probability measure for which
$$ \int_{H}f(x)(\mu_{1} * \mu_{2})(dx) = \int_{H}\int_{H}f(x+y)\mu_{1}(dx)\mu_{2}(dy),$$
for all $f \in B_{b}(H)$. Note that $\mu_{1} * \mu_{2} = \mu_{2} * \mu_{1},$ and for all $u \in H,$ $$\widehat{\mu_{1} * \mu_{2}}(u) = \widehat{\mu_{1}}(u)\widehat{\mu_{2}}(u).$$
Suppose that $Q$ is a probability kernel, i.e. a mapping $H \times {\cal B}(H) \rightarrow [0,1]$, so that for fixed $A \in {\cal B}(H)$, the mapping $x \rightarrow Q(x, A)$ is measurable, and for fixed $x \in H, Q(x, \cdot) \in {\cal M}_{1}(H)$. If $\rho \in {\cal M}_{1}(H)$, then $Q\rho$ is defined to be the unique measure in ${\cal M}_{1}(H)$ for which
$$ \int_{H}f(x)(Q\rho)(dx) = \int_{H}\int_{H}f(y)Q(x, dy)\rho(dx),$$
for all $f \in B_{b}(H)$. If $T:H \rightarrow H$ is a bounded linear operator, and $Q(x, \cdot) = \delta_{Tx}$ (where $\delta_{y}$ is the usual Dirac mass at $y \in H$), then $Q\rho = \rho \circ T^{-1}$, which we also write succinctly as $T\rho$.

\section{L\'{e}vy Processes and Stochastic Integration in Hilbert Space}

Let $(\Omega, {\cal F}, P)$ be a probability space that is equipped with a filtration $({\cal F}_{t}, t \geq 0)$ , which satisfies the ``usual hypotheses'' of right-continuity and completeness. Let $H$ be a real separable Hilbert space. A {\it L\'{e}vy process} is an adapted $H$-valued process $L = (L(t), t \geq 0)$ for which
\begin{itemize}
\item $L(0) = 0$ (a.s.),
\item $L$ is stochastically continuous,
\item $L$ has stationary and independent increments, where the latter is in the strong sense that $L(t) - L(s) $ is independent of ${\cal F}_{s}$, for all $0 \leq s < t < \infty$,
\item $L$ has \cadlag~paths.
\end{itemize}
Then  $L$ has the following L\'{e}vy-It\^{o} decomposition (see \cite{AR,RvG}), for all $t \geq 0$:

\begin{equation} \label{LIdec}
L(t) = bt + B_{Q}(t) + \int_{||x|| \leq 1}x\tilde{N}(t, dx) + \int_{||x|| > 1}xN(t, dx),
\end{equation}
wherein
\begin{itemize}
\item The vector $b \in H$,
\item The process $B_{Q} = (B_{Q}(t), t \geq 0)$ is a Brownian motion taking values in $H$ with covariance operator $Q$, so that for all $s,t \geq 0, \psi, \phi \in H$,
$$ \E(\la \psi, B_{Q}(s) \ra \la \phi, B_{Q}(t) \ra ) =  \la Q\psi,  \phi \ra(s \wedge t),$$ where $Q$ is a positive, symmetric, trace-class operator,
\item $N$ is a Poisson random measure on $[0, \infty) \times H$, which is independent of $B_{Q}$, and has compensator:
$$ \tilde{N}(dt, dx) = N(dt, dx) - dt \nu(dx),$$
where $\nu$ is a L\'{e}vy measure on $H$, i.e.
$$ \nu(\{0\}) = 0~\mbox{and}~\int_{H}(||x||^{2} \wedge 1)\nu(dx) < \infty.$$
\end{itemize}

From this we can deduce the L\'{e}vy-Khintchine formula, for all $ t \geq 0, u \in H$,

$$ \E(e^{i \la u, L(t) \ra}) = e^{t \eta(u)},$$
where the {\it characteristic exponent} $\eta: H \rightarrow \C$ is given by
\begin{equation} \label{LK}
\eta(u) = i \la u, b \ra - \frac{1}{2}\la Qu, u \ra \\
 +  \int_{H}(e^{i \la u, y \ra} - 1 - i \la u, y \ra {\bf 1}_{B_{1}}(y)) \nu(dy).
\end{equation}

The law of $L(t)$ is uniquely determined by its {\it characteristics} $(b, Q, \nu)$.

The process $L$ is a martingale, and will be called a {\it L\'{e}vy martingale}, if and only if $\int_{|x| > 1}|x| \nu(dx) < \infty$ and $b = - \int_{|x| > 1}x \nu(dx)$ (see e.g. \cite{Appbk} p.133) in which case we may write (\ref{LIdec}) as

$$ L(t) = B_{Q}(t) + \int_{H}x\tilde{N}(t, dx).$$

If $L(t) = L_{M}(t) + bt$, where $L_{M}$ is a L\'{e}vy martingale, we call $L$ a {\it L\'{e}vy martingale with drift}.

If $F = (F(t), t \geq 0)$ is a suitably regular adapted, bounded operator valued process in $H$, we can makes sense of stochastic integrals $\int_{0}^{t}F(t)dL(t)$, either by using the general approach of \cite{MePe} (see in particular, sections 1.2 and 6.14), or by using the L\'{e}vy-It\^{o} decomposition (\ref{LIdec}), as in \cite{App1} and Chapter 8 of \cite{PZ}, and defining:

\bean \int_{0}^{t}F(s)dL(s) & = & \int_{0}^{t}F(s)b~ds + \int_{0}^{t}F(s)dB_{Q}(s)\\
 & + & \int_{0}^{t}\int_{||x|| \leq 1}F(s)x\tilde{N}(t, dx) + \int_{0}^{t}\int_{||x|| > 1}F(s)xN(t, dx).
\eean

Here the first integral is a Bochner integral, and the last is a random sum. The middle two may be defined  by using stochastic integration against martingale-valued measures. In this paper we will only be concerned with the case where $F$ is deterministic. In that case, we may also define $\int_{0}^{t}F(s)dL(s)$ using integration by parts, as in \cite{JV}.

\vspace{5pt}

{\bf Note.} A Borel probability measure on $H$ is infinitely divisible if and only if its characteristic exponent is given by (\ref{LK}); see Theorem 4.10, pp.181-2 in \cite{Par1}.

\section{Ornstein-Uhlenbeck Processes and Stochastic Partial Differential Equations}

Let $(S(t), t \geq 0)$ be a $C_{0}$-semigroup acting in $H$ (i.e. a strongly continuous one-parameter semigroup of bounded, linear operators on $H$). Let $Y_{0}$ be a fixed ${\cal F}_{0}$ measurable random variable, and $L$ be an $H$-valued L\'{e}vy process. The {\it Ornstein-Uhlenbeck process} (or OU process, for short) associated with these data is the adapted process $Y = (Y(t), t \geq 0)$ defined for $t \geq 0$ by
\begin{equation} \label{OU}
Y(t) = S(t)Y_{0} + \int_{0}^{t}S(t-r)dL(r).
\end{equation}

Observe that if $H = \R$, then we must have $S(t) = e^{\lambda t}$ for some $\lambda \in \R$, and we recapture the usual L\'{e}vy-driven one-dimensional Ornstein-Uhlenbeck process (see e.g. \cite{Appbk} pp.237-43, and references therein). When $H$ is finite dimensional we may consider matrix semigroups of the form $S(t) = e^{-tA} = \sum_{n=0}^{\infty}(-1)^{n}\frac{A^{n}}{n!}$. Such objects are discussed from a distributional perspective in \cite{Sa} pp.106-14. From now on we will always assume that $H$ is infinite-dimensional.

The stochastic integral in (\ref{OU}) always make sense using the procedures that we have sketched (see also \cite{CM}). The process $t \rightarrow \int_{0}^{t}S(t-r)dL(r)$ is called a {\it stochastic convolution}. It is a semimartingale, but in contrast to the finite-dimensional case, when $L$ is a L\'{e}vy martingale, the stochastic convolution is not, in general, the product of a martingale with a deterministic process. This is because we cannot decompose the semigroup as ``$S(t-r) = S(t)S(-r)$''.  This causes problems in studying the time-regularity of the OU process, which will be discussed in section 4.

Motivation for studying the OU process comes from the study of {\it stochastic partial differential equations}, or SPDEs for short, that are driven by L\'{e}vy noise. Consider, for example the heat equation in $\Rd$,
\begin{equation} \label{he}
\frac{\partial u}{\partial t} = \sum_{j=1}^{d}\frac{\partial^{2} u}{\partial x_{j}^{2}},
\end{equation}
with initial condition $u(0) = u_{0} \in L^{2}(\Rd)$. One approach to randomizing this equation would be to introduce a suitable two-parameter L\'{e}vy sheet $(L(x,t), x \in \R^{d}, t \geq 0)$ and seek to give meaning to
the equation
$$ \frac{\partial Y(t,x)}{\partial t} = \sum_{j=1}^{n}\frac{\partial^{2} Y(t,x)}{\partial x_{j}^{2}} + \dot{L}(x,t),$$
whose solution would be a random field on $[0, \infty) \times \Rd$.

This procedure is implemented in \cite{ApWu}, by extending the methodology developed by Walsh \cite{Wa} for the case of Gaussian noise. An alternative approach is to recognise that the Laplacian $\Delta = \sum_{j=1}^{d}\frac{\partial^{2} u}{\partial x_{j}^{2}}$ is the infinitesimal generator of the heat semigroup $(S_{h}(t), t \geq 0)$ in $L^{2}(\Rd)$. Then the solution of the heat equation (\ref{he}) is given by $u(t) = S_{h}(t)u_{0}$ in $L^{2}(\Rd)$. From this point of view, we may introduce L\'{e}vy noise by using the Hilbert-space valued L\'{e}vy process $L$ and write the SPDE as an infinite-dimensional,  It\^{o}-sense stochastic differential equation

\begin{equation} \label{she}
 dY(t) = \Delta Y(t)dt + dL(t).
\end{equation}

Next we generalise, and appreciate that there is nothing special from the point of view of It\^{o} calculus, about the semigroup $(S_{h}(t), t \geq 0)$ acting in $L^{2}(\Rd)$. In fact we may consider an arbitrary $C_{0}$-semigroup $(S(t), t \geq 0)$ having infinitesimal generator $A$, and acting in a Hilbert space $H$. Then we seek to make sense of solutions to:

\begin{equation} \label{Lang}
 dY(t) = A Y(t)dt + dL(t).
\end{equation}

We regard equation (\ref{Lang}) as an infinite-dimensional {\it Langevin equation}. It has a unique mild solution which is given by the variation of constants formula, and this is precisely the OU process (\ref{OU}). As is shown in Theorems 9.15 and 9.29 of \cite{PZ}, or in \cite{App1}, this is also the unique weak solution in that for all $u \in \mbox{Dom}(A^{*}), t \geq 0$, with probability $1$,
\begin{equation} \label{weak}
\la u, Y(t) - Y_{0} \ra = \la u, L(t) \ra + \int_{0}^{t} \la A^{*}u, Y(s) \ra ds.
\end{equation}

Equation (\ref{Lang}) is the simplest SPDE with additive L\'{e}vy noise. More generally, one may consider more complicated equations with multiplicative noise of the form:
\begin{equation} \label{SDEm}
dY(t) = (AY(t) + F(t,Y(t-))) dt + G(t,Y(t-))dL(t).
\end{equation}

For a comprehensive introduction to the study of these equations, see \cite{PZ}. The equation (\ref{SDEm}) is called {\it semilinear} if $G(t,Y(t-)) = G$ is a bounded linear operator, and in this case in particular, knowledge of the case $F \equiv 0$ (i.e. the Ornstein-Uhlenbeck process), may be an important first step in obtaining information about the solution to the more general equation, see e.g. \cite{PZ1,APMS}. The two different approaches to solving SPDEs that we have presented above, using on the one hand, two-parameter ``space-time white noise'', and on the other hand, infinite-dimensional processes, have each generated a considerable literature; nonetheless there are important cases where they both give rise to the same solution, see e.g. \cite{DQ}.

\vspace{5pt}

{\bf Note.} A rather straightforward generalisation of (\ref{Lang}) considers equations of the form
$$ dY(t) = AY(t) dt + BdL(t),$$
where $L$ takes values in a different Hilbert space $H_{1}$, and $B$ is a bounded linear operator from $H_{1}$ to $H$. For simplicity, we will always work with (\ref{Lang}) in this article; readers who wish to consider the extended case can easily make the required minor adjustments.

\section{Basic Properties of Ornstein-Uhlenbeck Processes}

It follows from elementary properties of stochastic integrals with respect to L\'{e}vy processes that $(Y(t) - Y_{0}, t \geq 0)$ is an additive process, i.e. a stochastically continuous process, which vanishes at zero (a.s.), and has independent increments. Hence, for $t > 0$, the random variable $Y(t) - Y_{0}$ is infinitely divisible. From our assumptions, $Y(t) - Y_{0}$ is independent of $Y_{0}$, and so, if $Y_{0}$ is infinitely divisible, then so is $Y(t)$. If $Y_{0}$ is infinitely divisible with characteristic exponent $\eta_{0}$, we have (see \cite{CM, App2}), for all $u \in H$,
$$ \E(e^{i \la u , Y(t) \ra}) = \exp{ \left\{\eta_{0}(S(t)^{*}(u)) + \int_{0}^{t}\eta(S(r)^{*}(u))dr \right\}}
.$$

Furthermore, if $Y_{0}$ has characteristics $(b_{0}, Q_{0}, \nu_{0})$, then $Y(t)$ has characteristics $(b_{t}, Q_{t}, \nu_{t})$ where:

\bean b_{t} & = & S(t)b_{0} + \int_{H}S(t)x[{\bf 1}_{B_{1}}(S(t)x) - {\bf 1}_{B_{1}}(x)]\nu_{0}(dx)\\
& + & \int_{0}^{t}S(r)bdr + \int_{0}^{t}\int_{H}S(r)x[{\bf 1}_{B_{1}}(S(r)x) - {\bf 1}_{B_{1}}(x)]\nu(dx)dr, \eean

$$ Q_{t} = S(t)Q_{0}S(t)^{*} + \int_{0}^{t}S(r)QS(r)^{*}dr,$$

$$ \nu_{t}(B) = \nu_{0}(S(t)^{-1}(B)) + \int_{0}^{t}\nu_{0}(S(r)^{-1}(B))dr, $$

for each $B \in {\cal B}(H)$.

In the frequently encountered case where $Y_{0} = y_{0}$ (a.s.), with fixed $y_{0} \in H$, then $\eta_{0}(S(t)^{*}u) = \la u, S(t)y_{0} \ra$.

Necessary and sufficient conditions for the OU process $Y$ to have \cadlag~paths have not yet been established, and this seems to be a difficult problem, in general. Two sufficient conditions are described in \cite{PZ}, pp.156-61. In both cases we require the process $L$ to be a square-integrable L\'{e}vy martingale with drift.

\begin{enumerate}

\item {\it The Kotelenez approach.} This utilises a generalisation of Doob's submartingale inequality to stochastic convolution with respect to martingales. In our case, the conclusion is that the stochastic convolution (and hence the OU process) has a \cadlag~version if $(S(t), t \geq 0)$ is a generalised contraction semigroup, i.e. there exists $\beta > 0$ so that $||S(t)|| \leq e^{\beta t}$ for all $t \geq 0$.

\item {\it The Hausenblas-Seidler approach.} This requires $(S(t), t \geq 0)$ to be a contraction semigroup. It utilises the Nagy-Foias theory of dilations of such semigroups to embed $H$ as a closed subspace of a larger Hilbert space $H_{1}$, and assert that there is a strongly continuous unitary group $(U(t), t \in \R)$ in that larger space so that $S(t) = PU(t)$ for all $t \geq 0$, where $P$ is the orthogonal projection of $H_{1}$ onto $H$. Then the stochastic convolution has a \cadlag~version. A key feature of the proof is to work in $H_{1}$, and use the group property to write
    $$ \int_{0}^{t}U(t-s)dL(s) = U(t)\int_{0}^{t}U(-s)dL(s).$$

\end{enumerate}

\noindent Since it is the solution to an SDE, $(Y(t), t \geq 0)$ is a Markov process, and we may compute the action of the corresponding transition semigroup $(P_{t}, t \geq 0)$ on the space $B_{b}(H)$ from (\ref{OU}) to obtain
for each $t \geq 0, f \in B_{b}(H), x \in H$,

\begin{eqnarray} \label{Me1}
P_{t}f(x) & = & \E(f(Y(t))| Y_{0} = x) \nonumber \\
& = & \int_{0}^{t}f(S(t)x + y)\mu_{t}(dy),
\end{eqnarray}
where $\mu_{t}$ is the law of $\int_{0}^{t}S(t-r)dL(r)$, or equivalently, by stationary increments of $L$, the law of $\int_{0}^{t}S(r)dL(r)$. Note that the semigroup $(P_{t}, t \geq 0)$ is not, in general, strongly continuous on $C_{b}(H)$ or its subspace of bounded, uniformly continuous real-valued functions on $H$; this is discussed for the Gaussian case in \cite{dPZ2}, p.111, and the same argument works in the L\'{e}vy case. It is however always quasi-equicontinuous with respect to a weaker topology, called the mixed topology, in $C_{b}(H)$. The semigroup $(P_{t}, t \geq 0)$ then has an infinitesimal generator ${\cal L}$ (in this generalised sense), which has the following representation for functions $f$ in a certain dense domain (see \cite{App2} for details):
\bean
{\cal L }f(x) & = & \langle A^{*}(Df)(x), x \rangle + \langle (Df)(x), b \rangle +
\frac{1}{2}\tr((D^{2}f)(x)Q) \nonumber \\
 & + & \int_{H-\{0\}}[ f(x+y) - f(x) - \langle (Df)(x), y \rangle
{\bf 1}_{B_{1}}(y)]\nu(dy),
\eean

\noindent where $x \in H$, and $D$ is the Fr\'{e}chet derivative. Pseudo-differential operator representations of ${\cal L}$ have been obtained in \cite{LR1}, and in Proposition 4.1 of \cite{App2}.

The measures $(\mu_{t}, t \geq 0)$ have an interesting property, which will play an important role in the sequel. Observe that for all $r, t \geq 0$:
$$ \int_{0}^{r+t}S(v)dL(v)  =  \int_{0}^{r}S(v)dL(v)  + \int_{r}^{r+t}S(v)dL(v),$$ and that the two stochastic integrals on the right hand side of this equation are independent.
Furthermore, by stationary increments of $L$, and  the semigroup property we have
$$ \int_{r}^{r+t}S(v)dL(v) \st  \int_{0}^{t}S(r+v)dL(v)= S(r)\int_{0}^{t}S(v)dL(v). $$

We conclude that

\begin{equation}  \label{scs1}
\mu_{r+t} = \mu_{r} * S(r)\mu_{t}.
\end{equation}

It is also not difficult to verify that for all $u \in H, t \geq 0$,

\begin{equation} \label{FTsc}
\widehat{\mu_{t}}(u) = \exp{\left\{\int_{0}^{t}\eta(S(r)^{*}u)dr \right\}}.
\end{equation}

\section{Mehler Semigroups and Skew-Convolution Semigroups}

In this section we look at a more abstract approach to some of the ideas we've encountered in the last section. Let us suppose that we are given, as above, a $C_{0}$-semigroup $(S(t), t \geq 0)$ on $H$, and also a family $(\rho_{t}, t \geq 0)$ of Borel probability measures on $H$, with $\rho_{0} = \delta_{0}$. Define the linear operators $(T_{t}, t \geq 0)$ on $B_{b}(H)$ by

\begin{equation} \label{Me2}
T_{t}f(x) = \int_{0}^{t}f(S(t)x + y)\rho_{t}(dy),
\end{equation}

\noindent for each $t \geq 0, f \in B_{b}(H), x \in H$.

A natural question to ask is, when does $(T_{t}, t \geq 0)$ satisfy the semigroup property, i.e. $T_{s+t} = T_{s}T_{t}$, for all $s,t \geq 0$? As is shown in Proposition 2.2 of \cite{BRS}, this will hold if and only if
$(\rho_{t}, t \geq 0)$ is a {\it skew-convolution semigroup}, i.e. for all $r,t \geq 0$,

\begin{equation}  \label{scs2}
\rho_{r+t} = \rho_{r} * S(r)\rho_{t},
\end{equation}

\noindent or equivalently, for each $u \in H$,

$$ \widehat{\rho_{r+t}}(u) = \widehat{\rho_{r}}(u)\widehat{\rho_{t}}(S(r)^{*}u).$$

When (\ref{scs2}) holds, we call $(T_{t}, t \geq 0)$ a (generalised) {\it Mehler semigroup} as (\ref{Me2}) generalises the classical Mehler formula for one-dimensional Gaussian Ornstein-Uhlenbeck processes (see e.g. \cite{Appbk}, p.405). Mehler semigroups have been extensively studied in recent years, as analytic objects in their own right, particularly by M.R\"{o}ckner and his collaborators, and we draw the reader's attention to \cite{BRS,FR,LR1,LR2,RW,ORW}.  Skew convolution semigroups have appeared in other contexts (see below) and are also called {\it measure-valued cocycles} -- see e.g. \cite{Jur1}.

We have seen in the last section, that Mehler semigroups appear as the transition semigroups of Hilbert space-valued OU processes. We might ask if the converse is valid, and if, starting from a Mehler semigroup, we can construct a \cadlag~OU process for which it is the transition semigroup. The answer is affirmative and we will sketch the idea of the proof, drawing on the accounts in \cite{BRS}, \cite{FR}, and section 11.5 of \cite{Li}.

Following \cite{FR}, we assume that for all $u \in H$, the mapping $t \rightarrow \widehat{\rho_{t}}(u)$ is absolutely continuous on $[0, \infty)$, differentiable at $t=0$, and defining $\eta(u) = \left.\frac{d}{dt}\widehat{\rho_{t}}(u)\right|_{t=0}$, that $t \rightarrow \eta(S(t)^{*}u)$ is locally integrable. It follows that (c.f. (\ref{FTsc}))
\begin{equation} \label{FTscm}
\widehat{\rho_{t}}(u) = \exp{\left\{\int_{0}^{t}\eta(S(r)^{*}u)dr\right\}}.
\end{equation}
Furthermore the mapping $\eta$ is negative definite, and we may define a weakly-continuous convolution semigroup of probability measures $(\xi_{t}, t \geq 0)$ on $H$ so that for all $t \geq 0$,
$$ \widehat{\xi_{t}}(u) = e^{t\eta(u)}.$$
Then we may obtain a \cadlag~L\'{e}vy process $L = (L(t), t \geq 0)$ defined on some probability space, and taking values in $H$, having characteristic exponent $\eta$ by using standard techniques (see e.g. Chapters 1 and 2 of \cite{Appbk}).

The next step is to construct a Hilbert space $\tilde{H}$, in which $H$ is continuously embedded, so that $(S(t), t \geq 0)$ extends to a $C_{0}$-semigroup $(\widetilde{S}(t), t \geq 0)$ in $\tilde{H}$ whose generator $\tilde{A}$ is an extension of $A$, with $H \subset \mbox{Dom}(\tilde{A})$. To see how to do this, we first note that there exists $M \geq 1$ and $c \geq 0$ so that for all $t \geq 0, ||S(t)|| \leq Me^{tc}$. Choose $\lambda > c$ and let $R_{\lambda} = (\lambda I - A)^{-1}$ be the corresponding resolvent of $A$. We define a new inner product $\la \cdot, \cdot \ra_{1}$ on $H$ by the prescription
$$ \la \phi, \psi \ra_{1} = \la R_{\lambda} \phi, R_{\lambda} \psi \ra,$$
and define $\tilde{H}$ to be the completion of $H$ in the corresponding norm. Now for all $\tilde{u} \in \tilde{H}$, we define

\begin{equation} \label{OUcons}
Y(t) = \widetilde{S}(t)\tilde{u} + L(t) + \int_{0}^{t}\widetilde{S}(t-s)\tilde{A}L(s)ds.
\end{equation}

By formal differentiation of (\ref{OUcons}), we see that (\ref{OU}) is satisfied by this process (with respect to the extended Hilbert space, and semigroup), with initial condition $Y(0) = \tilde{u}$. Observe that the construction of $\tilde{H}$ ensures that the integral in (\ref{OUcons}) makes sense as $L(s) \subseteq \mbox{Dom}(\tilde{A})$ for all $s \geq 0$. As pointed out by Li in \cite{Li}, we may take $\tilde{u} \in H$ to get an $H$-valued version of the process, but this may not have \cadlag~paths.

In the case where the skew-convolution semigroup is square-integrable, so that $\int_{H}||x||^{2}\rho_{t}(dx) < \infty$ for all $t \geq 0$, it is shown in \cite{Li}, section 11.4. that the regularity conditions assumed above to obtain (\ref{FTscm}) may be dropped; but the price we pay for this is that the measures $(\xi_{t}, t \geq 0)$ will live on a larger Hilbert space than $H$, consisting of the locally square-integrable entrance paths for the semigroup $(S(t), t \geq 0)$.

We now discuss an interesting extension of the notion of skew convolution semigroup, which is due to Dawson and Li \cite{DL0}. Let $(E, +)$ be an abelian Hausdorff semigroup, and for $t \geq 0$, let $Q_{t}: E \times {\cal B}(E) \rightarrow [0,1]$ be the transition kernels of a Markov process. We assume that the underlying process has a {\it branching property} so that for all $t \geq 0, x_{1}, x_{2} \in E$
\begin{equation} \label{branch}
Q_{t}(x_{1} + x_{2}, \cdot) = Q_{t}(x _{1}, \cdot) * Q_{t}(x_{2}, \cdot).
\end{equation}

We say that a family of probability measures $(\mu_{t}, t \geq 0)$ on $(E, {\cal B}(E))$  is a {\it generalised skew-convolution semigroup} if for all $s, t \geq o$,

\begin{equation} \label{genscs}
  \mu_{s+t} = Q_{t}\mu_{s} * \mu_{t}.
\end{equation}

Observe that (\ref{genscs}) coincides with (\ref{scs2}), when we take $E = H$ and $Q_{t}(x, \cdot) = \delta_{S(t)x}(\cdot).$  Now define for $t \geq 0, x \in E$, the kernels

\begin{equation} \label{newk}
Q_{t}^{\mu}(x, \cdot) = Q_{t}(x, \cdot) * \mu_{t}.
\end{equation}

The next result gives a generalisation of the Mehler semigroup/OU process circle of ideas to this broader context. We include the proof for the reader's convenience, as we haven't found one in the literature.

\begin{theorem} \label{trans}
$(Q_{t}^{\mu}, t \geq 0)$ is the transition kernel of a Markov process.
\end{theorem}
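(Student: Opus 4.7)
There are three things to verify: (i) for each $t \geq 0$ and $x \in E$, the map $A \mapsto Q_t^{\mu}(x,A)$ is a probability measure on $(E, {\cal B}(E))$; (ii) for each $t \geq 0$ and $A \in {\cal B}(E)$, the map $x \mapsto Q_t^{\mu}(x,A)$ is Borel measurable; and (iii) the Chapman--Kolmogorov identity
$$ Q_{s+t}^{\mu}(x,A) = \int_E Q_s^{\mu}(x, dy)\, Q_t^{\mu}(y,A), \qquad s,t \geq 0,~x \in E,~A \in {\cal B}(E).$$
Properties (i) and (ii) follow routinely from the assumed kernel structure of $Q_t$ together with the fact that $\mu_t$ is a probability measure: convolution against a fixed probability measure preserves total mass, and preserves measurability of the integrand in the parameter $x$.

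The substance of the theorem is (iii). The approach is to test against a bounded Borel $f: E \to \R$ and expand both instances of $Q^{\mu}$ using the definition (\ref{newk}), obtaining a fourfold integral
$$ \int_E Q_s^{\mu}(x, dy) \int_E Q_t^{\mu}(y, dw) f(w) = \int_E \mu_s(db)\int_E Q_s(x, da)\int_E \mu_t(dv)\int_E f(u+v)\, Q_t(a+b, du).$$
The branching property (\ref{branch}) replaces $Q_t(a+b, \cdot)$ by $Q_t(a, \cdot) * Q_t(b, \cdot)$, separating the contributions of $a$ and $b$ in the innermost integral. Two applications of Fubini then collapse the expression: first, pairing $Q_s(x, da)$ with $Q_t(a, \cdot)$ yields $Q_{s+t}(x, \cdot)$ by Chapman--Kolmogorov for the original kernel $Q_t$; second, pairing $\mu_s(db)$ with $Q_t(b, \cdot)$ yields the image measure $(Q_t \mu_s)(\cdot)$ in the sense of the probability-kernel action recalled in the preliminaries.

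Reassembling, the original expression becomes $\int_E f(w)\,[Q_{s+t}(x,\cdot) * (Q_t\mu_s) * \mu_t](dw)$, and the generalised skew-convolution relation (\ref{genscs}) identifies $(Q_t\mu_s) * \mu_t$ with $\mu_{s+t}$. This gives $\int_E f(w)\, Q_{s+t}^{\mu}(x, dw)$, which is precisely the right-hand side of (iii). The main obstacle is organizational: one must carry four dummy variables through the calculation so that each of the three ingredients---the branching property (\ref{branch}), the Chapman--Kolmogorov identity for $Q_t$, and the generalised skew-convolution relation (\ref{genscs})---is applied exactly once and in the correct order. No topological or regularity difficulties arise, since Fubini on products of probability measures suffices throughout and the semigroup structure of $E$ enters only through the definition of convolution.
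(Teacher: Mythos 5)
Your proposal is correct and follows essentially the same route as the paper: expand both occurrences of $Q^{\mu}$ via (\ref{newk}), split $Q_{t}(a+b,\cdot)$ using the branching property (\ref{branch}), then contract with the Chapman--Kolmogorov identity for the original kernel and the generalised skew-convolution relation (\ref{genscs}). The only difference is cosmetic: you also record the routine kernel properties (total mass one and measurability in $x$), which the paper leaves implicit.
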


\begin{proof}
We must show that the Chapman-Kolomogorov equations are satisfied, i.e. that for all $f \in B_{b}(E), x \in E$,
$$ \int_{S}f(z)Q_{s+t}^{\mu}(x, dz) = \int_{S}\int_{S}f(z)Q_{t}^{\mu}(y, dz)Q_{s}^{\mu}(x, dy).$$
We begin with the right hand side of the last display and use (\ref{newk}) twice to obtain
\bean & & \int_{S}\int_{S}f(z)Q_{t}^{\mu}(y, dz)Q_{s}^{\mu}(x, dy)\\ & = & \int_{S}\int_{S}\int_{S}f(u+v)Q_{t}(y, du)\mu_{t}(dv)Q_{s}^{\mu}(x, dy)\\
& = & \int_{S}\int_{S}\int_{S}\int_{S} f(u+v)Q_{t}(w_{1}+w_{2}, du)\mu_{t}(dv)Q_{s}(x, dw_{1})\mu_{s}(dw_{2})\\
& = & \int_{S}\int_{S}\int_{S}\int_{S}\int_{S} f(u_{1} + u_{2} +v)\mu_{t}(dv) Q_{t}(w_{1}, du_{1}) Q_{t}(w_{2}, du_{2})Q_{s}(x, dw_{1})\mu_{s}(dw_{2})\\
& = & \int_{S}\int_{S}\int_{S} f(u+v)\mu_{s+t}(dv)Q_{t}(w_{1}, du)Q_{s}(x, dw_{1})\\
& = & \int_{S}\int_{S}f(u+v)\mu_{s+t}(dv)Q_{s+t}(x,du)\\
& = & \int_{S}f(z)Q_{s+t}^{\mu}(x, dz), \eean
where we have used (\ref{branch}), (\ref{genscs}) and Fubini's theorem.

\end{proof}

\vspace{5pt}

{\it Remark.} The following was pointed out to the author by Zenghu Li. Suppose that $K$ has a neutral element $e$, and that $Q_{t}(e, \cdot) = \delta_{e}(\cdot)$ for all $t \geq 0$. Then (\ref{genscs}) is also a necessary condition for $(Q_{t}^{\mu}, t \geq 0)$ to be a transition kernel. This follows from straightforward manipulations of the Chapman-Kolmogorov equation for $Q^{\mu}_{s+t}(e, \cdot)$.

\vspace{5pt}

There are two other important examples, apart from the Mehler semigroup case:
\begin{itemize} \item Take $E = {\cal M}({\cal T})$, the space of all finite Borel measures on a Lusin topological space ${\cal T}$ equipped with the topology of weak convergence. The rule $+$ is the usual addition of measures. Then $(Q_{t}^{\mu}, t \geq 0)$ is a continuous-state branching process with immigration, where the laws of the immigration process are $(\mu_{t}, t \geq 0)$. For a monograph account of these processes, see \cite{Li}. For interesting connections between catalytic branching processes and Mehler semigroups, see \cite{DLSS}; indeed, in that paper it is shown that L\'{e}vy-driven Ornstein-Uhlenbeck processeses on $H = L^{2}((0, \infty))$ arise as limits of fluctuations of the immigration process associated with catalytic branching super-absorbing Brownian motion in $(0, \infty)$.

\item Take $E = \R^{+} \times \R$, and let $(Q_{t}, t \geq 0)$ be a homogeneous affine process. In this case $(Q_{t}^{\mu}, t \geq 0)$ is a general affine process. This is shown in \cite{DL}; applications to finance of general affine processes may be found in \cite{DFS}.

\end{itemize}

Another, more direct generalisation of (\ref{Me2}) and (\ref{scs2}) is to consider two-parameter objects, so instead of a semigroup, we have a two-parameter evolution family of operators $(U(s,t); s \leq t)$ acting in $H$ such that for all $r \leq s \leq t$,
$$ U(r,t) = U(s,t)U(r,s),$$
and we seek a family of probability measures $(\mu_{s,t}, s \leq t)$ such that
$$ \mu_{r,t} = \mu_{s,t} * (U(s,t)\mu_{r,s}).$$
For a detailed investigation of this set-up, see \cite{OuRo}.

\section{Invariant Measures and Operator \\ Self-decomposability}

Let $Y = (Y(t), t \geq 0)$ be the OU process defined by (\ref{OU}), and $(P(t), t \geq 0)$ be the associated Mehler semigroup. We say that $\mu \in {\cal M}_{1}(H)$ is an {\it invariant measure} for $Y$ if for all $t \geq 0, f \in B_{b}(H)$,
\begin{equation} \label{invme}
\int_{H}P_{t}f(x)\mu(dx) = \int_{H}f(x)\mu(dx).
\end{equation}

The set of all invariant measures for $Y$ (which may be empty) is convex, and the extremal points are the ergodic measures for $Y$ (see Chapter 3 of \cite{dPZ1}). Hence if $Y$ has a unique invariant measure, then it is ergodic. It is well-known that if an invariant measure $\mu$ exists for $Y$, and if we choose $Y_{0}$ to have law $\mu$, then $Y$ is a (strictly)-stationary Markov process. In which case, for all $t \geq 0$,
$$ Y_{0} \st Y(t) = S(t)Y_{0} + \int_{0}^{t}S(t-r)dL(r),$$
or equivalently,
\begin{equation} \label{opsd}
\mu = S(t)\mu + \mu_{t},
\end{equation}
where we recall that $\mu_{t}$ is the law of $\int_{0}^{t}S(r)dL(r)$.

Measures that have the property (\ref{opsd}) are called {\it operator self-decomposable}, and simply {\it self-decomposable} in the case where $S(t) = e^{-\lambda t}$ for some $\lambda > 0$; we will have more to say about these later on in this section. For now we quote the following from \cite{App2}, where it appears as Theorem 2.1:

\begin{theorem} The following are equivalent:
\begin{enumerate}
\item $\mu$ is an invariant measure for $Y$;
\item The OU process $Y$ is strictly stationary with ${\cal L}(Y_{0}) = \mu$;
\item $\mu$ is operator self-decomposable  in the sense of (\ref{opsd}).
\end{enumerate}
\end{theorem}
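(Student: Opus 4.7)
The plan is to establish (1) $\Leftrightarrow$ (3) by a direct computation that unwinds the invariance identity using the Mehler representation (\ref{Me1}), and then (1) $\Leftrightarrow$ (2) by appealing to general facts about Markov processes. There is a lot of notation to track but no real obstacle; the nontrivial content is entirely in the equivalence (1) $\Leftrightarrow$ (3).

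For the equivalence (1) $\Leftrightarrow$ (3), I would start from the invariance condition
$$ \int_{H} P_{t}f(x)\,\mu(dx) = \int_{H} f(x)\,\mu(dx), \qquad f \in B_{b}(H),$$
and substitute the Mehler formula $P_{t}f(x) = \int_{H} f(S(t)x + y)\,\mu_{t}(dy)$ from (\ref{Me1}). By Fubini (valid since $f$ is bounded and all measures finite), the left-hand side becomes
$$ \int_{H}\int_{H} f(S(t)x + y)\,\mu_{t}(dy)\,\mu(dx) = \int_{H} f(z)\,\bigl((S(t)\mu) * \mu_{t}\bigr)(dz),$$
where $S(t)\mu = \mu \circ S(t)^{-1}$ as in the Preliminaries. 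Thus invariance is equivalent to $\int f\,d\mu = \int f\,d(S(t)\mu * \mu_{t})$ for every $f \in B_{b}(H)$. Since Borel probability measures on $H$ are determined by their integrals against $B_{b}(H)$, this is equivalent to $\mu = S(t)\mu * \mu_{t}$ for all $t \geq 0$, which is precisely (\ref{opsd}).

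For (2) $\Rightarrow$ (1), if $Y$ is strictly stationary with $\mathcal{L}(Y_{0}) = \mu$, then in particular $\mathcal{L}(Y(t)) = \mu$ for every $t \geq 0$, hence
$$ \int_{H} P_{t}f\,d\mu = \mathbb{E}\bigl(\mathbb{E}(f(Y(t))\mid Y_{0})\bigr) = \mathbb{E}(f(Y(t))) = \int_{H} f\,d\mu,$$
giving (1). For (1) $\Rightarrow$ (2), assume $\mu$ is invariant and take $Y_{0}$ with law $\mu$. Iterating invariance gives $\mathcal{L}(Y(t)) = \mu$ for all $t \geq 0$. By the Markov property and the time-homogeneity of $(P_{t}, t \geq 0)$, the finite-dimensional distribution of $(Y(t_{1}+h),\dots,Y(t_{n}+h))$ for $0 \leq t_{1} < \cdots < t_{n}$ is given by integrating against $\mu$ (the law of $Y(h) = Y_{0}$ in distribution) and then iterating the transition kernels at the time increments $t_{j+1} - t_{j}$, which are independent of the shift $h$. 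Hence $Y$ is strictly stationary.

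The only delicate point is the push-forward identification in the first step; this is handled cleanly by the notation convention $S(t)\mu = \mu \circ S(t)^{-1}$ introduced in the Preliminaries, so that the convolution $(S(t)\mu)*\mu_{t}$ appearing from Fubini is exactly the object featured in (\ref{opsd}). Everything else is bookkeeping.
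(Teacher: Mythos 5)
Your argument is correct, and it is a complete, self-contained proof of a statement that the paper itself only quotes from \cite{App2} (the surrounding text sketches just one implication). The comparison is worth making explicit. The paper's route to (3) is probabilistic and passes through (2): invariance of $\mu$ plus the Markov property gives strict stationarity, whence $Y_{0} \st Y(t) = S(t)Y_{0} + \int_{0}^{t}S(t-r)dL(r)$, and taking laws --- using that $Y_{0}$ is independent of the stochastic convolution, whose law is $\mu_{t}$ --- yields $\mu = S(t)\mu * \mu_{t}$. Your route establishes (1) $\Leftrightarrow$ (3) directly and analytically, via the identity $\int_{H} P_{t}f\,d\mu = \int_{H} f\,d\bigl(S(t)\mu * \mu_{t}\bigr)$ obtained from the Mehler formula (\ref{Me1}) and Fubini; this is the deterministic shadow of the same independence computation, but it has the advantage of delivering both directions of (1) $\Leftrightarrow$ (3) in one stroke, whereas the paper's sketch only covers (2) $\Rightarrow$ (3) and leaves the converse implicit. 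Your treatment of (1) $\Leftrightarrow$ (2) by the standard finite-dimensional-distribution argument for time-homogeneous Markov processes is the same as what the paper invokes as ``well-known.'' Two small notational points: the formula (\ref{Me1}) and the display (\ref{opsd}) in the paper contain typographical slips (the integral should be over $H$, and the $+$ should be $*$), and you have correctly read both as intended.
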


Heuristically, when they exist, invariant measures for a Markov process typically arise as weak limits of the law of the process, corresponding to the dynamical system ``settling down after a suitably large time has passed''. We will make this intuition more precise below; however observe that (\ref{opsd}) is the formal limit of (\ref{scs1}), as $t \rightarrow \infty$.

\subsection{Invariant Measures}

A very detailed analysis of invariant measures for $Y$ was carried out by Chojnowska-Michalik \cite{CM} (see also \cite{FR, App1}). We state her most general results in two theorems:

\begin{theorem} \label{CMth1} If $(\mu_{t}, t \geq \infty)$ converges weakly as $t \rightarrow \infty$, then the limit $\mu_{\infty}$ is an invariant measure for $Y$. Furthermore, any other invariant measure for $Y$ is of the form $\beta * \mu_{\infty}$, where $\beta \in {\cal M}_{1}(H)$ is such that $\beta = S(t)\beta$, for all $t \geq 0$.
\end{theorem}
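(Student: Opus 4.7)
My plan is to handle the two assertions in turn by passing to Fourier transforms and exploiting the skew convolution identity $\mu_{r+t} = \mu_r * S(r)\mu_t$ from (\ref{scs1}) together with Theorem 6.1, which identifies invariant measures with operator self-decomposable measures satisfying (\ref{opsd}).

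For the first claim, I would fix $r \geq 0$, take Fourier transforms of (\ref{scs1}) to obtain
\[
\widehat{\mu_{r+t}}(u) = \widehat{\mu_r}(u)\,\widehat{\mu_t}(S(r)^{*}u),
\]
and let $t \to \infty$. Weak convergence $\mu_t \to \mu_\infty$ gives pointwise convergence of characteristic functions, so the left side tends to $\widehat{\mu_\infty}(u)$ and the right side tends to $\widehat{\mu_r}(u)\widehat{\mu_\infty}(S(r)^{*}u)$. This is exactly the Fourier form of $\mu_\infty = \mu_r * S(r)\mu_\infty$, i.e.\ the self-decomposability relation (\ref{opsd}); Theorem 6.1 then yields that $\mu_\infty$ is invariant for $Y$.

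For the second claim, let $\mu$ be any invariant measure. By Theorem 6.1, $\mu = S(t)\mu * \mu_t$ for every $t \geq 0$, so writing $\sigma_t := S(t)\mu$ I would show that $\sigma_t$ converges weakly to some probability measure $\beta$ as $t\to\infty$. Since each $\mu_t$ is infinitely divisible, $\widehat{\mu_t}$ never vanishes, and the factorisation gives
\[
\widehat{\sigma_t}(u) = \frac{\widehat{\mu}(u)}{\widehat{\mu_t}(u)} \longrightarrow \frac{\widehat{\mu}(u)}{\widehat{\mu_\infty}(u)} =: \phi(u),
\]
where $\widehat{\mu_\infty}$ likewise has no zeros because $\mu_\infty$ is infinitely divisible as a weak limit of infinitely divisible measures. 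The function $\phi$ is continuous with $\phi(0)=1$. To upgrade this pointwise convergence of characteristic functions to weak convergence of $\sigma_t$, I would establish tightness of $\{\sigma_t\}$ from the decomposition $\mu = \sigma_t * \mu_t$ together with tightness of the convergent family $\{\mu_t\}$ (a standard convolution–tightness argument on the Polish space $H$, equivalently exploiting the version of L\'evy's continuity theorem for Hilbert spaces in Parthasarathy). Passing to the limit in $\mu = \sigma_t * \mu_t$ then yields $\mu = \beta * \mu_\infty$.

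It remains to verify $S(r)\beta = \beta$ for all $r \geq 0$. Using the semigroup property,
\[
\sigma_{r+t} = S(r+t)\mu = S(r)\sigma_t,
\]
and letting $t \to \infty$: the left side tends weakly to $\beta$, while continuity of the bounded operator $S(r)$ on probability measures (it is a continuous map for weak convergence) gives $S(r)\sigma_t \to S(r)\beta$. Hence $\beta = S(r)\beta$, as required. The main technical obstacle is the tightness step for $\{\sigma_t\}$: pointwise convergence of Fourier transforms alone is insufficient for weak convergence on an infinite-dimensional Hilbert space, so one must genuinely exploit the factorisation $\mu = \sigma_t * \mu_t$ and the tightness of the convergent family $\{\mu_t\}$ to conclude that $\phi$ is the characteristic function of a (necessarily unique) probability measure $\beta$ and that $\sigma_t \Rightarrow \beta$.
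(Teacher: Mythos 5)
The paper itself gives no proof of this theorem: it is quoted from Chojnowska-Michalik \cite{CM} (see also \cite{FR, App1}), so there is nothing internal to compare against. Judged on its own merits, your argument is correct and is essentially the standard proof from that literature: the first claim follows exactly as you say by passing to Fourier transforms in (\ref{scs1}) and letting $t \to \infty$, then invoking the equivalence of invariance with (\ref{opsd}); the second claim is the division-of-characteristic-functions argument, legitimate because $\mu_{t}$ and its weak limit $\mu_{\infty}$ are infinitely divisible and hence have nonvanishing Fourier transforms. You are right that the only genuinely delicate point is the tightness of $\sigma_{t} = S(t)\mu$, and the convolution argument you gesture at does close the gap: since $[0,\infty] \ni t \mapsto \mu_{t}$ is weakly continuous with limit $\mu_{\infty}$, the family $\{\mu_{t}\}$ is uniformly tight by Prokhorov; given $\varepsilon > 0$ choose compacts $K, K'$ with $\mu(K) \geq 1-\varepsilon$ and $\inf_{t}\mu_{t}(K') \geq 1-\varepsilon$, and then
\[
1-\varepsilon \leq \mu(K) = \int_{H}\sigma_{t}(K-y)\,\mu_{t}(dy) \leq \sup_{y \in K'}\sigma_{t}(K-y) + \varepsilon,
\]
so $\sigma_{t}(K-K') \geq 1-2\varepsilon$ for all $t$, with $K-K'$ compact. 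Tightness plus pointwise convergence of $\widehat{\sigma_{t}}$ to $\phi$ forces every subsequential weak limit to have characteristic function $\phi$, giving a unique limit $\beta$; the remaining steps (passing to the limit in $\mu = \sigma_{t}*\mu_{t}$ and using the continuous mapping theorem for $S(r)$) are fine as written.
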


\begin{theorem} \label{CMth2}  The net $(\mu_{t}, t \geq \infty)$ converges weakly if and only if the following three conditions are satisfied:
\begin{enumerate}
\item $\int_{0}^{\infty}\tr(S(t)QS(t)^{*})dt < \infty$;
\item $\int_{0}^{\infty}\int_{H}(||S(t)||^{2} \wedge 1)\nu(dx)dt < \infty;$
\item $\lim_{t \rightarrow \infty}\int_{0}^{t}\left(S(r)b + \int_{H}S(r)x[{\bf 1}_{B_{1}}(S(r)x) - {\bf 1}_{B_{1}}(x)]\right)\nu(dx)dr$ exists in $H$.
\end{enumerate}
\end{theorem}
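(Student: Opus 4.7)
The plan is to identify the characteristics of $\mu_t$ explicitly and then apply a standard characterisation of weak convergence for infinitely divisible probability measures on a Hilbert space (see \cite{Par1}, Ch.~VI). From (\ref{FTsc}) and (\ref{LK}), Fubini gives
\begin{equation*}
\widehat{\mu_t}(u) = \exp\left\{i\la u, b_t^{0}\ra - \tfrac{1}{2}\la Q_t^{0} u, u\ra + \int_H \left(e^{i \la u, y\ra} - 1 - i\la u, y\ra {\bf 1}_{B_{1}}(y)\right)\nu_t^{0}(dy)\right\},
\end{equation*}
where $Q_t^{0} = \int_0^t S(r) Q S(r)^{*} dr$, $\nu_t^{0}(B) = \int_0^t \nu(S(r)^{-1}B)\,dr$ for $B \in {\cal B}(H)$, and $b_t^{0}$ is exactly the expression inside the limit in condition~3 (these triples coincide with the characteristics displayed in Section~4 specialised to $Y_{0}=0$). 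Parthasarathy's criterion then says that $\mu_t$ converges weakly to some (necessarily infinitely divisible) $\mu_\infty$ with characteristics $(b_\infty, Q_\infty, \nu_\infty)$ iff (a) $Q_t^{0} \to Q_\infty$ in trace-class norm, (b) $\nu_t^{0} \to \nu_\infty$ in the Lévy-measure sense (vague convergence on $H\setminus\{0\}$ together with uniform control of $\int_{\|y\|\leq\varepsilon}\|y\|^{2}\nu_t^{0}(dy)$ as $\varepsilon \to 0$), and (c) $b_t^{0} \to b_\infty$ in $H$.

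Next I would match (a)--(c) with conditions 1--3, using monotonicity. The net $Q_t^{0}$ is increasing in the cone of positive self-adjoint operators, so (a) is equivalent to $\sup_t \tr(Q_t^{0}) < \infty$; by Fubini this is condition~1, and the limit $Q_\infty$ is then automatically trace-class. The measures $\nu_t^{0}$ are increasing in $t$ on Borel sets bounded away from $0$, so by monotone convergence $\nu_\infty^{0} := \lim_t \nu_t^{0}$ exists as a Borel measure on $H\setminus\{0\}$, and (b) reduces to checking $\int_H (\|y\|^2 \wedge 1)\nu_\infty^{0}(dy) < \infty$; a further Fubini identifies this with condition~2 (reading $\|S(t)x\|^{2}\wedge 1$ in the integrand). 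Given (a) and (b), condition (c) is literally condition~3. For the converse, weak convergence of $\mu_t$ forces convergence of the triple $(b_t^{0}, Q_t^{0}, \nu_t^{0})$ via Parthasarathy's theorem, and the same monotonicity reduces this back to conditions 1--3.

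The main technical obstacle lies in the compensator-cutoff correction $[{\bf 1}_{B_{1}}(S(r)x) - {\bf 1}_{B_{1}}(x)]$ occurring inside $b_t^{0}$: neither indicator is individually $\nu$-integrable, so the two terms must be kept together. What rescues the argument is that this bracket is supported on the symmetric difference $\{x \in B_{1}\}\triangle\{S(r)x \in B_{1}\}$, on which the integrand $S(r)x\,[{\bf 1}_{B_{1}}(S(r)x) - {\bf 1}_{B_{1}}(x)]$ is dominated, up to a constant, by $\|S(r)x\|^{2}\wedge 1$ in $L^{1}(dr\otimes\nu)$ under condition~2. Once this bookkeeping is in place, the factorisation of $\mu_t$ into its independent Gaussian, compensated-small-jumps and large-jumps-plus-drift pieces makes the three-way equivalence transparent.
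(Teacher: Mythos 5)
The paper itself offers no proof of this theorem: it is quoted from Chojnowska-Michalik \cite{CM}, so there is no in-house argument to compare against. Your route --- read off the characteristics $(b_t^{0},Q_t^{0},\nu_t^{0})$ of $\mu_t$ from (\ref{FTsc}), (\ref{LK}) and Fubini, then invoke the convergence theory for infinitely divisible laws on a Hilbert space from \cite{Par1} --- is the standard one and is essentially how the result is established in the source; you also correctly observe that condition 2 must be read with $\|S(t)x\|^{2}\wedge 1$ in the integrand (the printed statement is a typo). Two points in your write-up need repair, though neither changes the architecture.

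First, the criterion you attribute to Parthasarathy is stated in its finite-dimensional form. In a Hilbert space, convergence of the Gaussian-plus-compensated-small-jumps part requires more than convergence of traces together with the uniform control of $\int_{\|y\|\leq\varepsilon}\|y\|^{2}\nu_t^{0}(dy)$ as $\varepsilon\to 0$: one also needs the truncated covariance operators $T_t=Q_t^{0}+\int_{\|y\|\leq 1}(y\otimes y)\,\nu_t^{0}(dy)$ to be uniformly trace-tight, i.e. $\sup_{t}\sum_{k>N}\la T_t e_k,e_k\ra\to 0$ as $N\to\infty$. This is exactly where infinite-dimensionality bites, and it is rescued by the same monotonicity you use elsewhere (under conditions 1 and 2 one has $T_t\leq T_\infty$ with $T_\infty$ trace-class); you should say this explicitly rather than leave the finite-dimensional statement standing. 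Second, your claimed domination of $S(r)x[{\bf 1}_{B_{1}}(S(r)x)-{\bf 1}_{B_{1}}(x)]$ by a constant multiple of $\|S(r)x\|^{2}\wedge 1$ is false: on the part of the symmetric difference where $x\in B_{1}$ but $\|S(r)x\|>1$, the integrand has norm $\|S(r)x\|$, which can be of order $\|S(r)\|$, while $\|S(r)x\|^{2}\wedge 1=1$ there, and $\|S(r)\|$ need not stay bounded as $r\to\infty$. Fortunately no such domination is needed: for each finite $t$ the double integral converges absolutely, because on the symmetric difference either $\|x\|>1$ or $\|x\|>\|S(r)\|^{-1}$, so $\nu$ restricted there is finite and $\|S(r)\|$ is locally bounded in $r$; the behaviour as $t\to\infty$ of the combined drift is then precisely what condition 3 postulates, so the term should be handled by hypothesis rather than by an integrability estimate.
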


The measure $\mu_{\infty}$, defined in Theorem \ref{CMth1}, is infinitely divisible, and its characteristics are $(b_{\infty}, Q_{\infty}, \nu_{\infty})$ where
$$ b_{\infty} = \lim_{t \rightarrow \infty}\int_{0}^{t}\left(S(r)b + \int_{H}S(r)x[{\bf 1}_{B_{1}}(S(r)x) - {\bf 1}_{B_{1}}(x)]\right)\nu(dx)dr,$$

$$ Q_{\infty} = \int_{0}^{\infty}S(t)QS(t)^{*}dt,$$

$$ \nu_{\infty}(B) = \int_{0}^{\infty}\nu_{0}(S_{r}^{-1}(B))dr $$

for each $B \in {\cal B}(H)$.

\vspace{5pt}

There are two special cases of interest that are also treated in \cite{CM}:

\begin{itemize}

\item If the semigroup $(S(t), t \geq 0)$ is {\it stable}, i.e. $\lim_{t \rightarrow \infty}S(t)x = 0$, for all $x \in H$, then an invariant measure $\mu_{\infty}$ exists (and is unique) if and only if $\lim_{t \rightarrow \infty}\int_{0}^{t}S(r)dL(r)$ converges in probability, in which case $\mu_{\infty}$ is the law of this limiting random variable.

\item If the semigroup $(S(t), t \geq 0)$ is {\it exponentially stable}, i.e. $||S(t)|| \leq Ce^{-at}$ for some $C \geq 1, a > 0$, then a sufficient condition for existence of a (unique) invariant measure is that
    $$ \int_{H} \log^{+}(||x||)\nu(dx) < \infty.$$
    In the case where $(S(t), t \geq 0)$ is a contraction semigroup having a bounded generator for which $\lim_{t \rightarrow \infty}S(t)= 0$ in the norm topology on $L(H)$, then this logarithmic integrability condition is both necessary and sufficient, as is shown in Theorem 2.1 of \cite{Jur0}. It is well-known that the condition is both necessary and sufficient in finite dimensions (see \cite{Appbk} p.242, and references therein).

\end{itemize}

\subsection{Operator Self-decomposability}

Let $\mu \in {\cal M}_{1}(H)$ be arbitrary, and define the set:
$$ {\mathbb D}(\mu) = \{T \in L(H); \mu = T\mu * \mu_{T}~\mbox{for some}~\mu_{T} \in {\cal M}_{1}(H)\}.$$

The set ${\mathbb D}(\mu)$ is a semigroup (with respect to composition of operators); indeed, if $S,T \in {\mathbb D}(\mu)$, then $ST \in {\mathbb D}(\mu)$ with
\begin{equation} \label{opfactor}
\mu_{ST} = \mu_{S} * S\mu_{T}.
\end{equation}

It is shown in \cite{Jur1} that ${\mathbb D}(\mu)$ is closed in the strong topology on $L(H)$. It is called the {\it Urbanik semigroup} of the measure $\mu$.
It is easy to see that $\mu$ is operator self-decomposable, in the sense of (\ref{opsd}) if and only if ${\mathbb D}(\mu)$ contains a $C_{0}$-semigroup $(S(t), t \geq 0)$, in which case the measure $\mu_{t} = \mu_{S(t)}$. Then comparing  with (\ref{scs1}), we see that (\ref{opfactor}) is just the skew-convolution property, hence we may associate a Mehler semigroup with $\mu$, and also an OU process by the construction of section 5, if the semigroup is sufficiently well-behaved.

Operator self-decomposable measures arose in the investigation by Urbanik \cite{Urb} of certain normalised sums of independent random variables that are associated with uniformly infinitesimal triangular arrays of probability measures on $H$. The one-dimensional problem was solved by Paul L\'{e}vy (see e.g. Theorem 15.3 in \cite{Sa}, p.51), and gives rise to the classical notion of self-decomposable distribution. Urbanik showed that, under certain technical conditions, operator self-decomposable distributions arise as such limits if and only if $\lim_{t \rightarrow \infty}S(t) = 0$, in the uniform topology on $L(H)$. Under these conditions he was also able to prove infinite divisibility of the limiting laws. It remains an open problem to extend his results to general $C_{0}$-semigroups. More recently, it has been shown \cite{BJ} that self-decomposable distributions arise as limits of triangular arrays of strongly mixing sequences of random variables. For more about operator self-decomposability, see \cite{Jur1}, and references therein.

\vspace{5pt}

If we have an invariant measure $\mu$ for the OU semigroup, it is natural to treat $\mu$ as a ``reference measure'', and to investigate the analytic properties of $P_{t}$ in the Banach spaces $L^{p}(H, \mu)$, where $1 \leq p \leq \infty$. Indeed it is easy to see that $(P_{t}, t \geq 0)$ acts as a contraction semigroup in each of these spaces. It is also a legitimate $C_{0}$-semigroup when $1 \leq p < \infty$ (see the discussion on p.300 of \cite{LR1}).  A useful tool is the {\it second quantised} representation of $P(t)$, given by its action on the chaotic decomposition of $L^{2}(E, \mu)$ into spaces of multiple Wiener-L\'{e}vy integrals. In the non-Gaussian case, this was obtained in \cite{Pesz}, and from a more general viewpoint in \cite{AvN1, AvN2}. In \cite{ORW}, the authors extend the work of \cite{RW} to establish a Harnack inequality for the Mehler semigroup associated to the OU process. This then implies the strong Feller property for the semigroup, from which we can deduce that the transition probabilities of the process are all absolutely continuous with respect to $\mu$.

\section{Cylindrical Ornstein-Uhlenbeck Processes}

Infinite-dimensional processes arise naturally in mathematical modelling through noise that is described, for each $t \geq 0$ by the formal series:
\begin{equation} \label{cyl1}
L(t) = \sum_{n=1}^{\infty}\beta_{n}L_{n}(t)e_{n},
\end{equation}
where $\beta_{n} \in \R$ for each $\nN$, $(L_{n}, \nN)$ is a sequence of independent real valued L\'{e}vy processes, and $(e_{n}, \nN)$ is a complete orthonormal basis for $H$. In fact from now on, we will identify $H$ with the space $l^{2}(\N)$ of all real-valued, square-summable sequences. Such a process $L$, is an example of a {\it cylindrical L\'{e}vy process}, and we give a general definition of this concept below. To see that $L$ is not, in general, an $H$-valued L\'{e}vy process, take $\beta_{n} = 1$ and $L_{n}$ to be a standard Brownian motion for all $\nN$. Then we can easily compute that $X$ has characteristics $(0, I, 0)$; but since the identity $I$ is not trace-class, we do not have a legitimate Brownian motion, or even a L\'{e}vy process.

A systematic theory of cylindrical L\'{e}vy processes was developed in \cite{ApRi}. This was founded on the theory of cylindrical measures which was developed in the 1960s and 70s by Laurent Schwartz, among others (see e.g. \cite{Schw}). For $a_{1}, \ldots, a_{n} \in H$, define $\pi_{a_{1},\ldots, a_{n}}: H \rightarrow \R^{n}$, by $\pi_{a_{1},\ldots, a_{n}}(x) = (\la x, a_{1} \ra, \ldots, \la x, a_{n} \ra)$. Then a {\it cylindrical probability measure} $\mu$ is a set function defined on the algebra generated by all cylinder sets of the form $\pi^{-1}_{a_{1}, \ldots, a_{n}}(B)$ where $B \in {\cal B}(\R^{n}), a_{1}, \ldots, a_{n} \in H$ and $\nN$ such that,
\begin{enumerate}
\item $\mu(H) =1$,
\item The restriction of $\mu$ to the $\sigma$-algebra generated by $\{\pi^{-1}_{a_{1}, \ldots, a_{n}}(B)), B \in {\cal B}(\R^{n})\}$ is a bona fide probability measure, for each fixed $a_{1}, \ldots, a_{n} \in H$ and $\nN$.
\end{enumerate}

\noindent In general, a {\it cylindrical process} $X$ is defined to be a family of linear mappings, $(X(t), t \geq 0)$,  from $H$ to $L^{0}(\Omega, {\cal F}, P)$. Then for each $t \geq 0$ we obtain a cylindrical probability measure $\mu_{t}$ which plays the role of the ``cylidrical law'' of $X(t)$ as follows:
 $$ \mu_{t}(\pi^{-1}_{a_{1}, \ldots, a_{n}}(B)) = P((X(t)a_{1}, \ldots, X(t)a_{n}) \in B).$$ We say that a cylindrical process $X$ is a {\it cylindrical L\'{e}vy process} if for all $t \geq 0, a_{1}, \ldots, a_{n} \in H, \nN$,
$(X(t)a_{1}, \ldots, X(t)a_{n})$ is a L\'{e}vy process in $\R^{n}$. As shown in Lemma 4.2 of \cite{Ried1}, the representation (\ref{cyl1}) gives a specific class of examples of this more general notion when we identify $L(t)$ therein with the mapping which sends each $a \in H$ to the random variable $\sum_{n=1}^{\infty}\beta_{n}L_{n}(t)\la e_{n}, a \ra$, where the $\beta_{n}$'s are chosen to ensure the series converges for all $t \geq 0$. In particular, we obtain a cylindrical L\'{e}vy process in the case where the $L_{n}$'s are i.i.d. centred, square-integrable L\'{e}vy processes and the sequence $(\beta_{n}, \nN)$ is bounded.

A version of the L\'{e}vy-It\^{o} decomposition holds for cylindrical L\'{e}vy processes, and this is used to define stochastic integrals. We may then consider {\it cylindrical Ornstein-Uhlenbeck processes}
$$ Y(t) = S(t)Y_{0} + \int_{0}^{t}S(t-r)dX(r),$$
as solutions of the SDE,

\begin{equation} \label{Langmore}
dY(t) = AY(t)dt + dX(t),
\end{equation}

\noindent where the meaning of the last display is precisely that for all $u \in \mbox{Dom}(A^{*}), t \geq 0$, with probability $1$,
$$ Y(t)(u) = Y_{0}(u) + \int_{0}^{t} Y(r)(A^{*}u) dr + X(t)(u).$$
This is, of course, a natural generalisation to the cylindrical context of the notion of weak solution, as defined in (\ref{weak}). Note that $(Y(t), t \geq 0)$ is, in general, itself a cylindrical process; indeed it is cylindrical Markov in that $(Y(t)a_{1}, \ldots, Y(t)a_{n}), t \geq 0)$ is a Markov process in $\R^{n}$ for each $a_{1}, \ldots, a_{n} \in H, \nN$. Cylindrical notions of Mehler semigroup, invariant measure and selfdecomposable distribution are all developed in \cite{ApRi}.

When working with cylindrical measures and processes, the notion of {\it Radonification} is important. This refers to finding a mapping into a possibly larger space that transforms the cylindrical object into a bona fide one. A important theorem  \cite{JKFR} states that if $(M(t), t \geq 0)$ is a cylindrical semimartingale in $H$, then there exists a Hilbert-Schmidt operator $T$ on $H$ and a semimartingale $(N(t), t \geq 0)$ so that the real-valued processes $(M(t)(T^{*}x), t \geq 0)$ and $ (\la N(t), x \ra , t \geq 0)$ are indistinguishable, for all $x \in H$. In Theorem 5.10 of \cite{Ried1}, conditions are found for a suitable deterministic function $f$ so that its cylindrical stochastic integral $\int_{0}^{t}f(s)dL(s)$ has the property of {\it stochastic integrability} in that there exists a random variable $I_{t}$ such that for all $u \in H, \la I_{t}, u \ra = \left(\int_{0}^{t}f(s)dL(s)\right)(u)$ (see also Corollary 4.4 in \cite{Ried2}).

An alternative approach has been developed in a series of papers that focus on the specific class of cylindrical L\'{e}vy processes defined by (\ref{cyl1}), with the assumption that the $L_{n}$'s are i.i.d. For example \cite{BZ} considers a L\'{e}vy noise obtained by subordinating a cylindrical Wiener process, \cite{PZ1} deals with the case where $L_{n}$ is a symmetric stable process, while  \cite{PZ0} takes $L_{n}$ to be a symmetric pure jump L\'{e}vy process. In the account we give now, we follow \cite{PZ0}; but making a slight extension so that $L_{n}$ is a general symmetric L\'{e}vy process (i.e. we include a Gaussian component). Then for all $y \in \R, \nN, t \geq 0$,
\begin{equation} \label{LKoned}
\E(e^{iyL_{n}(t)}) = \exp{\left\{t\left(-\frac{1}{2}\sigma^{2}y^{2} + \int_{\R}(\cos(yx) - 1)\nu(dx)\right)\right\}},
\end{equation}
where $\sigma \geq 0$ and $\nu$ is a symmetric L\'{e}vy measure. Then $L(t)$ takes values in $H$ for all $t \geq 0$ (with probability one) if and only if
$$ \sum_{n=1}^{\infty}\left\{\beta_{n}^{2}\left(1 + \int_{|x| < 1/\beta_{n}}x^{2}\nu(dx)\right) + \int_{|x| \geq 1/\beta_{n}}\nu(dx)\right\} < \infty.$$

In order to make sense of Ornstein-Uhlenbeck processes within this context, it is necessary to assume that the operator $S(t)$ is compact and self-adjoint for all $t > 0$, and that $(e_{n}, \nN)$ in (\ref{cyl1}) is the complete orthonormal basis of eigenvectors that is guaranteed by the Hilbert-Schmidt theorem. It follows that for each $\nN, t \geq 0$
$$ S(t)e_{n} = e^{-t \gamma_{n}}e_{n},$$
where $\gamma_{n} > 0$ and $\lim_{n \rightarrow \infty}\gamma_{n}=\infty$.
We now consider the infinite-dimensional Langevin equation (\ref{Langmore}) as an infinite sequence of one-dimensional equations
$$ dY_{n}(t) = - \gamma_{n}Y_{n}(t)dt + \beta_{n}dL_{n}(t),$$
having solution starting at $Y(0) = y = (y_{n}) \in H$, given by
$$ Y_{n}(t) = e^{-t \gamma_{n}}y_{n} + \beta_{n}\int_{0}^{t}e^{-\gamma_{n}(t-s)}dL_{n}(t).$$

\noindent Then  $Y(t) = (Y_{n}(t), \nN)$  is a bona-fide stochastic process taking values in $H$ for all $t \geq 0$ (with probability one) if and only if
\begin{equation} \label{Rad}
\sum_{n=1}^{\infty}\frac{\beta_{n}^{2}}{\gamma_{n}} + \sum_{n=1}^{\infty}\frac{1}{\gamma_{n}}\int_{1/\beta_{n}}^{e^{\gamma_{n}}/\beta_{n}}\left(\frac{\psi_{0}(u)}{u^{3}} + \frac{\psi_{1}(u)}{u}\right)du < \infty,
\end{equation}
\noindent where $\psi_{0}(u):= \int_{|x| \leq u}x^{2}\nu(dx)$ and $\psi_{1}(u) = \int_{|x| > u}\nu(dx)$. Furthermore, the process is adapted and Markov. In particular, (\ref{Rad}) is satisfied if the sequence $(\beta_{n})$ is bounded, and the following holds:
$$ \int_{1}^{\infty} \log(y)\nu(dy) < \infty~\mbox{and}~\sum_{n= 1}^{\infty}1/\gamma_{n} < \infty,$$
in which case the process has a unique invariant measure. In Corollary 6.3 of \cite{Ried2}, it is shown that (\ref{Rad}) is a necessary and sufficient condition for the semigroup $(S(t), t \geq 0)$ to be stochastically integrable, (without the assumption that the $L_{n}$s are identically distributed).

The investigation of \cadlag~paths in this context has attracted some attention. In \cite{BGIPPZ}, the authors show that if the vectors $(e_{n}, \nN)$ are all in Dom$(A^{*})$ and the sequence $(\beta_{n})$ fails to converge to zero, then with probability $1$, the OU process $Y$ has no point $t >0$ at which either the right or left limit exists. In \cite{PZ2}, conditions are found on the L\'{e}vy measure for $Y$ to have a \cadlag~modification, provided the semigroup $(S(t), t \geq 0)$ is both exponentially stable and analytic.

In the case where the $L_{n}$'s have symmetric $\alpha$-stable distributions ($0 < \alpha < 2$), a finer analysis may be carried out; in \cite{PZ1} the authors obtain a number of interesting properties of the solution, including gradient estimates on the transition semigroup from which they deduce the strong Feller property; while in \cite{LiZh} it is shown that the OU process $Y$ has a \cadlag~modification if and only if $\sum_{n=1}^{\infty}|\beta_{n}|^{\alpha} < \infty$.

\vspace{5pt}

{\it Acknowledgements.} I am grateful to Zbigniew Jurek, Zenghu Li, Christian Fonseca Mora, Jan van Neerven and Markus Riedle for useful discussions, and help with eliminating typos.

\end{document}